\theoremstyle{plain}
\newtheorem{thm}{Theorem}[section]
\newtheorem{lemma}[thm]{Lemma}
\newtheorem{proposition}[thm]{Proposition}
\theoremstyle{definition}
\newtheorem{remark}[thm]{Remark}
\numberwithin{equation}{section}
\newcommand{\ga}[2]{\begin{gather}\label{#1}#2 \end{gather}}
\newcommand{\sA}{{\mathcal A}}
\newcommand{\sC}{{\mathcal C}}
\newcommand{\sE}{{\mathcal E}}
\newcommand{\sK}{{\mathcal K}}
\newcommand{\sL}{{\mathcal L}}
\newcommand{\sO}{{\mathcal O}}
\newcommand{\sS}{{\mathcal S}}
\newcommand{\sV}{{\mathcal V}}
\newcommand{\sX}{{\mathcal X}}
\newcommand{\C}{{\mathbb C}}
\newcommand{\F}{{\mathbb F}}
\newcommand{\G}{{\mathbb G}}
\renewcommand{\H}{{\mathbb H}}
\newcommand{\N}{{\mathbb N}}
\newcommand{\Q}{{\mathbb Q}}
\newcommand{\Z}{{\mathbb Z}}
\title [Cohomologically rigid local systems and integrality]{Cohomologically rigid local systems and integrality}
\author{H\'el\`ene Esnault \and Michael Groechenig} 
\address{Freie Universit\"at Berlin, Arnimallee 3, 14195, Berlin,  Germany}
\email{esnault@math.fu-berlin.de}
\address{Freie Universit\"at Berlin, Arnimallee 3, 14195, Berlin,  Germany}
\email{groemich@zedat.fu-berlin.de}
\thanks{The first  author is supported by  the Einstein program and the ERC
Advanced
Grant 226257, the second author was supported by a Marie Sk\l odowska-Curie fellowship: This project has received funding from the European Union's Horizon 2020 research and innovation programme under the Marie Sk\l odowska-Curie Grant Agreement No. 701679. \\ \includegraphics[height = 1cm,right]{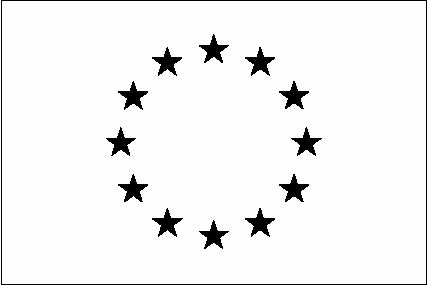}
}
\date{ January 2nd, 2018}
\begin{document}
\begin{abstract}
We prove that the monodromy of an  irreducible   cohomologically complex rigid  local system with finite determinant  and quasi-unipotent  local monodromies at infinity on  a smooth quasiprojective complex  variety $X$  
is integral. This answers positively  a special case of a conjecture by Carlos Simpson.  On a smooth projective variety, the argument relies on Drinfeld's theorem on the existence of $\ell$-adic companions over a finite field.  When the variety is quasiprojective, one has in addition to control the weights and the monodromy  at infinity.
\end{abstract}
\maketitle

\section{Introduction}\label{intro}
Let $X $ be a smooth connected quasiprojective complex  variety, $j: X\hookrightarrow \bar X$ be a {\it good compactification}, that is a  smooth compactification such that $D=\bar X\setminus X$ is a  normal crossings divisor.  
An irreducible complex local system 
 $\sV$   is said to be {\it cohomologically rigid} if  $\H^1(\bar X, j_{!*}\sE nd^0(\sV))=0.$
The finite dimensional complex vector space $\H^1(\bar X, j_{!*}\sE nd^0(\sV))$
 is the Zariski tangent space at the moduli point of $\sV$  of the Betti moduli stack of complex local systems of rank $r$ with prescribed determinant and  prescribed local monodromies along the components of $D$ (see Section~\ref{sec:mod}).  So a cohomologically rigid  complex local system is {\it rigid}, that is its moduli point is isolated, and in addition it is smooth. 
 
Simpson conjectures that {\it rigid irreducible  complex local systems with torsion determinant and quasi-unipotent monodromies  around the components of $D$  are of geometric origin}. In particular, they should be {\it integral}, 
where a complex local system is said to be integral if it is coming by extension of scalars from a local system of projective  $\sO_L$-modules of finite type, where  $\sO_L$ is the ring of integers of 
 a number field $L\subset \C$.
 See  \cite[Conj.~0.1, ~0.2]{LS16} for the formulation of the conjectures  in the projective case. 
 We prove:
 
\begin{thm} \label{thm:int}
 Let $X$ be a smooth connected   quasiprojective  complex variety.  Then irreducible cohomologically rigid complex  local systems with finite determinant and  quasi-unipotent  local monodromies around the components at infinity of a good compactification   are integral. 

\end{thm}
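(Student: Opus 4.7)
The strategy is to run Drinfeld's companions theorem on the good reductions of $\sV$ and to extract integrality from the resulting compatible system of $\ell$-adic local systems.

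First I would identify $\sV$ with a point of the Betti moduli stack $M$ parametrising complex local systems on $X$ of rank $r$ with the prescribed finite determinant and quasi-unipotent local monodromies along the components of $D$. Cohomological rigidity is exactly the vanishing of the Zariski tangent space at $[\sV]$, so $[\sV]$ is a smooth isolated point of $M$; since $M$ is of finite type over a ring of the form $\Z[1/N,\mu_N]$, it follows that $[\sV]$ is defined over a number field $L \subset \C$, and its $\Gal(\overline{\Q}/\Q)$-orbit is finite. For each finite place $\lambda$ of $L$ above a rational prime $\ell$, Riemann--Hilbert together with a chosen embedding $L_\lambda \hookrightarrow \C$ produces an $\ell$-adic local system on $X$. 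Spreading $X$ and its good compactification out to a smooth model over $\sO_{L,S}$ and reducing at a closed point $s$ of residue characteristic $p \neq \ell$ yields a lisse $\overline{\Q}_\ell$-sheaf $\sF_{\lambda,s}$ on a smooth $X_s/\F_q$, with tame quasi-unipotent monodromy around the boundary divisor $D_s$, finite determinant, and pure of weight zero.

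Next I would apply Drinfeld's theorem: for every prime $\ell' \neq p$ there is an $\ell'$-adic companion of $\sF_{\lambda,s}$ on $X_s$ matching characteristic polynomials of Frobenius at every closed point. The crucial extra work in the quasiprojective setting is to verify that this companion preserves the fixed determinant, the prescribed quasi-unipotent local monodromies at infinity, and the weight zero property, so that it lifts through Riemann--Hilbert to a point of the same moduli stack $M$. Granting this, rigidity and the finiteness of the Galois orbit of $[\sV]$ force each such lifted companion to be a Galois conjugate of $\sV$. Running over all primes $\ell'$, the characteristic polynomials of Frobenius at every closed point of $X_s$ therefore lie in $\sO_L$; combined with Chebotarev density and the trace-integrality criterion of Bass for absolutely irreducible representations, this shows that the monodromy representation of $\sV$ is conjugate into $GL_r(\sO_L)$, i.e.\ $\sV$ is integral.

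The principal obstacle, and the one singled out in the abstract, is precisely the boundary and weight control needed to place the $\ell'$-adic companion back into the rigid moduli problem $M$. Drinfeld's theorem only delivers compatibility of characteristic polynomials of Frobenius at closed points of $X_s$; in the projective case this, together with Deligne's purity, already suffices to land in the same rigid moduli, but in the quasiprojective case one must independently propagate the fixed determinant and the quasi-unipotent local monodromies along $D_s$ across the companions correspondence. This requires a careful study of weights and of the tame inertia representation around $D_s$, and is the main technical input beyond the projective argument.
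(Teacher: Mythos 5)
Your overall blueprint matches the paper's proof: identify the rigid point in a Betti moduli stack, pass to an $\ell$-adic realization, spread out and reduce mod $p$, invoke Drinfeld's companions theorem, control weights and local monodromy at infinity so the companion lands back in the same moduli problem, and conclude integrality. You also correctly flag the quasiprojective-specific difficulty (monodromy at infinity and weight control) as the heart of the matter. However, there is one genuine gap in the logic, and a couple of steps are glossed over that are in fact substantial.

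The gap is the sentence ``rigidity and the finiteness of the Galois orbit of $[\sV]$ force each such lifted companion to be a Galois conjugate of $\sV$.'' This does not follow, and the paper does not argue this way. What Drinfeld's theorem gives you is a lisse $\bar\Q_{\ell'}$-sheaf on $X_s$ whose Frobenius characteristic polynomials are $\sigma$-twists of those of $\sV_{\lambda,s}$; after pulling back through specialization and comparing Betti and \'etale cohomology, this yields a new cohomologically rigid complex local system in the set $\sS(r,d,h)$. But there is no direct reason it should be $\sigma$ applied to $\sV$ itself: the field isomorphism $\sigma:\bar\Q_\ell\to\bar\Q_{\ell'}$ is not continuous, so ``$\sigma\circ\rho_{\lambda,s}$'' is not a continuous representation, and Chebotarev plus Brauer--Nesbitt cannot identify the companion with it on the nose. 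The companion could a priori be any element of $\sS(r,d,h)$. The paper circumvents this by a counting argument: fix $r$, a bound $d$ on the order of the determinant, and a bound $h$ on the order of the eigenvalues of the local monodromies; then $\sS(r,d,h)$ is a finite set, of cardinality $\frak N = N(r,d,h)$. The companion construction, applied simultaneously to \emph{all} $\frak N$ elements of $\sS(r,d,h)$, is shown to be injective (using weight-zero purity to upgrade geometric isomorphism to arithmetic isomorphism) and to land back in $\sS(r,d,h)$; since the source and target have the same finite cardinality, it is a bijection. Therefore \emph{every} element of $\sS(r,d,h)$, and in particular the original $\sV$, is the companion of something and hence comes from a lisse $\bar\sO_{K_{\lambda'}}$-sheaf. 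That replaces your Galois-orbit step and also makes the final Chebotarev/Bass argument unnecessary in the paper's formulation, since a lisse $\bar\Q_{\ell'}$-sheaf on $X_{\bar s}$ automatically factors through $GL_r(\bar\Z_{\ell'})$ by compactness of $\pi_1^{\mathrm{\acute et}}(X_{\bar s})$, and this integral structure is transported to $\pi_1^{\rm top}(X)$ via the specialization map.

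Two further steps deserve to be made explicit. First, the passage from a geometric lisse sheaf on $X_{\bar s}$ to an \emph{arithmetic} one on $X_s$ (needed to even invoke Drinfeld) is not automatic: it requires a variant of Simpson's argument (Proposition~\ref{prop:simpson}), which uses the rigidity (via finiteness of the moduli stack) to show that, after a finite extension of the base field, the conjugation action of Frobenius fixes the isomorphism class, and one then has to lift the resulting projective representation to $GL_r$ compatibly with finite determinant; controlling the boundary monodromy through this step (Lemma~\ref{lem:unip}) takes real work. Second, proving that the companion is again \emph{cohomologically} rigid requires more than its purity: one compares the $L$-functions $L(X_{\bar s},\sE nd^0(\sV))$ and $L(X_{\bar s},\sE nd^0(\sV^\sigma))$, uses the weight filtration on $H^\bullet_c$ and duality to identify the weight-$1$ part of $\oplus_j H^j$, and then applies Lemma~\ref{lem:weights} (purity for $j_{!*}$ plus an inertia computation giving the weight bound on $R^1 a_{\bar s *}$) together with local acyclicity for the specialization map to conclude $H^1(U,a_*\sE nd^0(\sV^\sigma))=0$. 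Your phrase ``weight zero property'' points in the right direction, but the actual mechanism is this $L$-function and weight-stratification argument, and it is precisely what fails if one only knows purity of the companion without the quasi-unipotence and tameness at infinity, hence the coprimality constraints on $p$ and the use of Deligne's theorem on compatible systems on curves to transport the local monodromy datum through the companion correspondence.
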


When $X$ is projective, a first proof of Theorem~\ref{thm:int} using $F$-isocrystals and $p$ to $\ell$-companions  (\cite{AE16}) has been given in \cite{EG17}. 
The short proof presented in this note  only uses  the $\ell$ to $\ell'$ companions, the existence of which has been proved by Drinfeld (\cite[Thm.~1.1]{Dri12}). 
We outline it in the projective case.

We use the following criterion for integrality. Let $\sV$ be  rigid complex local system   which comes
 by extension of scalars from a local system $\sV_{\sO_{K,\Sigma}}$ of projective $\sO_{K,\Sigma}$-modules where 
 $K\subset \C$ is a number field, $\Sigma$ is a finite set of places of $K$ and $\sO_{K,\Sigma}$ is the ring of $\Sigma$-integers of $K$.  
  For any place $\lambda$ of $K$, let $K_\lambda$ be the completion of $K$ at $\lambda$,  $K_\lambda \subset \bar K_{\lambda}$ be an algebraic closure, $\sO_{K_\lambda}\subset \bar \sO_{K_\lambda}$ be the underlying extension of rings of integers. Then $\sV$ is integral if and only if for any $\lambda$ in $\Sigma$, $\sV_{\bar K_\lambda}=\sV_K\otimes_K \bar K_\lambda$ comes by extension of scalars from a local system 
$\sV_{\bar \sO_{K_\lambda}}$ of free $\bar \sO_{K_\lambda}$-modules.  The local system of projective $\sO_K$-modules is  defined as the inverse image of  $\prod_{\lambda \in \Sigma} \sV_{\bar \sO_{K_\lambda}} \subset 
\prod_{\lambda \in \Sigma} \sV_{\bar K_\lambda}$ via the localization map 
 $\sV_{\sO_{K, \Sigma}}\to \prod_{\lambda \in \Sigma} \sV_{\bar K_\lambda} .$
(See \cite[Cor.~2.3, Cor.~2.5]{Bas80} for the same criterion expressed in term of  traces of representations).

  We fix natural numbers $r$  and $d$.  The moduli stack of  irreducible complex local systems of rank $r$ and determinant of order $d$ is of finite type. Thus
  the set of 
   isomorphism classes of  such local systems which are cohomologically rigid has finite cardinality  $N(r,d)$. 
   For any algebraic closed field $C$ of characteristic $0$,  $N(r,d)$ is also the number of  isomorphism classes of  irreducible $C$-local systems of rank $r$ and determinant of order $d$.

  The $N(r,d)$ complex local systems come by extension of scalars of local systems $\sV_K$ defined  over a number field $K\subset \C$.  As the topological fundamental group is finitely generated, the $N(r,d)$ local systems  $\sV_K$  come by extension of scalars from  local systems 
$\sV_{\sO_{K,\Sigma}}$ of free $\sO_{K,\Sigma}$-modules, where $\Sigma$ is a finite set of places of $K$ and $\sO_{K,\Sigma}$ is the ring of $\Sigma$-integers of $K$.  We want to show that $\sV_{\bar K_\lambda} = \sV_{ K_\lambda} \otimes_{ K_\lambda  } \bar K_{\lambda}$  comes from a local system defined over $\sO_{\bar K_\lambda}$ for all $\lambda$ in $\Sigma$.

   One chooses a place $\lambda$ of $K$ which is not  in $\Sigma$.  It divides a prime number $\ell$.
We complete the finitely many  $
\sV_{ \sO_{K,\Sigma} }$ considered at $\lambda$ so as to obtain  $\lambda$-adic lisse sheaves on $X$.
  We take a model $X_S$ of $X$ over  a scheme  $S$ of finite type over $\Z$ such that $X_S/S$ is smooth, and take a closed point $s$ of $S$ of characteristic $p$
   prime to the order of the residual representations, to $\ell$,
   to $d$, and to the residual characteristics of the places in $\Sigma$. 
   Then 
 the $\lambda$-adic sheaves descend to  lisse sheaves on $X_{\bar s}$, where $ k(s)\hookrightarrow k(\bar s)\simeq \bar \F_p$ is  an algebraic closure of the residue field of $s$.

  By a variant of an argument of Simpson (see Proposition~\ref{prop:simpson}), after finite base change on $s$, the lisse sheaves descend to  arithmetic lisse sheaves. They are   irreducible on $X_{\bar s}$ and can be taken to have finite determinant. 
  By Drinfeld's existence theorem, for all places $\lambda'$ of $K$ not dividing $p$, in particular for all $\lambda'$ in $\Sigma$, they have $\lambda'$-companions, which are $\bar K_{\lambda'}$-lisse sheaves, thus by definition, 
   come from $\bar \sO_{K_{\lambda'}}$-lisse sheaves.
 Using purity, the $L$-function of the trace $0$ endomorphisms of the companions, base change, local acyclicity, and Betti to \'etale comparison, one shows that viewed back as  representations of the topological fundamental group with values in $GL(r, \bar \sO_{K_{\lambda'}})$, the $\lambda'$-companions
 define on $X$ the required number $N(r,d)$ of cohomologically rigid non-isomorphic $\bar K_{\lambda'}$-local systems of the type we want, all coming from $\bar \sO_{K_{\lambda'}}$-local systems.

To generalize the argument to the quasiprojective case and quasi-unipotent monodromies around the components at infinity, 
one needs that specializiation,  purity,  
 base change, local acyclicity are applicable in the non-proper case. 
 This being acquired, 
 we bound the problem by  bounding the rank $r$, and fixing two natural numbers $d$ and $h$ such that the order of the determinant divides $d$ and the order of the eigenvalues of the monodromies around the components at infinity divides $h$.  There are finitely many isomorphism classes of cohomologically rigid local systems with those invariants, say $N(r,d,h)$. 
 Deligne's theorem on compatible systems on curves \cite[Thm.~9.8]{Del73b}  implies that those data are preserved by passing to companions, which enables us to make the counting argument with $N(r,d)$ replaced  by $N(r,d,h)$.

\medskip
{\it Acknowledgements:}  Theorem~\ref{thm:int}
was initially written only in the projective case. We thank Pierre Deligne for suggesting to us the generalization   presented here, allowing quasi-unipotent monodromies  along the components at infinity.   Even if most of  the arguments presented in this new version are variants of the ones contained in our initial version,  Proposition~\ref{prop:coh} and Remark~\ref{rmk:IC} are due to him,  and  are crucial for applying the ideas developed in the projective case to the case of quasi-unipotent monodromies at infinity. 
Furthermore, for clarification, 
 he sent us his version
 of Simpson's crucial theorem \cite[Thm.~4]{Sim92},  together with his version of Proposition~\ref{prop:coh}, which we need to later develop our weight argument. Beyond the mathematics, we thank him for his commitment 
 to our article.  
  We thank Carlos Simpson for exchanges on his general conjecture and Luc Illusie for kindly giving us the reference  for local acyclicity.

\section{Cohomologically rigid local systems with prescribed monodromy around the components of the divisor at infinity} \label{sec:mod}
Let $X$ be a smooth connected  quasiprojective  complex variety, $j: X\hookrightarrow \bar X$ be a good compactification, $D=\bar X\setminus X$ be the  normal crossings divisor at infinity. We write $D=\cup_{i=1}^N D_i$ where the $D_i$ are the irreducible components. 
In this section we clarify the notion of {\it rigid irreducible  complex local systems with torsion determinant and quasi-unipotent monodromies  around the components of $D$.}

Let $U=\bar X\setminus D_{\rm sing}$, where $D_{\rm sing}$ is the singular locus of $D$, and $j: X\xrightarrow{a}U \xrightarrow{b} \bar X$ be the open embeddings.   We choose 
a complex point $x\in X$ and  $r \in \N_{\ge 1}$.  For each $i=1,\ldots, N$, we fix a conjugacy class $\sK_i \subset GL(r,  \C)$.  It is the  set of complex points of a subvariety of $GL(r)$  which we denote by the same letter $\sK_i$. 
 If $\Delta_i\subset U$ is a ball around $y_i\in D_i \cap U$ and    $x_i \in \Delta^\times_i$ with 
$ \Delta_i^\times =\Delta_i\setminus D_i\cap \Delta_i$, the fundamental group 
$\pi_1(\Delta_i^\times, x_i) $ is freely generated  by the local monodromy  $T_i$, determined uniquely up to sign,  around $D_i \cap \Delta_i$, so
$\pi_1(\Delta_i^\times, x_i) =\Z\cdot T_i$. 
Any  complex linear local system $\sV$ defines by restriction a complex  linear local system $\sV|_{\Delta^\times_i}$ on $\Delta_i^\times$.  We say  that  $\sV|_{\Delta_i^\times}$ {\it  is defined by} $\sK_i$  if  the image of $T_i$ lies in $\sK_i$  in its monodromy representation.   We say   that  $\sV$ {\it  is defined by} $\sK_i$  {\it along} $ D_i$  if $\sV|_{\Delta^\times_i}$ is 
{\it for all points} $y_i\in D_i\cap U$.  
As $D_i$ is irreducible, $D_i\cap U$ is smooth and connected, so the condition that $\sV$ be  defined by $\sK_i$ 
along $D_i$ is equivalent to the condition that $\sV|_{\Delta_i^\times}$ is defined by $\sK_i$ for the choice of one single $y_i  \in D_i\cap U$.   For each $i=1,\ldots, N$, we fix one such $y_i\in D_i\cap U$, $\Delta_i$ and $x_i\in \Delta_i^\times$ as above.
 We also fix a rank $1$ local system $\sL$ of  order $d$ defined by a character $ \chi_{\sL}: \pi_1^{\rm top}(X,x)\to \mu_d(\C)\subset \C^\times$.
 
 Let $T={\rm Spec}(B)$ be a  complex connected affine variety of finite type. A  $T$-{\it local system}  $\sV_T$  over $X$  is a local system of locally free $T$-modules, or equivalently a locally free $T$-sheaf $W$ together with a representation $\rho: \pi_1^{\rm top}(X,x)\to {\rm Aut}(W)$.  The {\it rank } of $\sV_T$ is the rank of $W$. A {\it geometrically  irreducible} $T$-{\it local system} is a $T$-local system  $\sV_T$ such that $\sV\times_T \bar \eta$ is an irreducible local system over $\bar \eta$ for all geometric generic points $\bar \eta$ of $T$. 
   We define a stack  from  the category of affine varieties  of finite type  over $K$ 
  to the category of groupoids, sending $T$ to  the groupoid of isomorphism classes of geometrically irreducible  $T$-local systems $\sV_T$ of rank $r$ together with an isomorphism $\wedge^r \sV_T\cong \sL\otimes_{\C} T$. 
  We denote it by ${\sf IrrLoc}(X,r, \sL)$. 
For each $i=1,\ldots, N$, we choose  a basis of $(\sV_T)_{x_{i}}$.
  We then define the  substack  $\underline{M}$ of ${\sf IrrLoc}(X,r, \sL)$ by the condition that the image of $T_{i}$ by the monodromy representation of $\sV_T|_{ \Delta^\times_{i}  }$ is a section of $\sK_i\times_K T\subset GL(r)\times_K T$.  As $\sK_i\subset GL(r)$ is locally closed, $\underline{M}\subset {\sf IrrLoc}(X,r, \sL)$ is a locally closed substack.

If  for any $i=1,\ldots, N$, the $\sK_i$ is the  conjugacy class of a quasi-unipotent matrix, we say that a point $[\sV]\in \underline{M}(K)$  has {\it quasi-unipotent monodromies along the components of $D$}.  This implies in particular that the varieties $\sK_i$  are defined over a number field $K$, and consequently $\underline{M}$ is defined over the same number field. 
 We shall need two properties of $\underline{M}$. 
\begin{proposition} \label{prop:stack}  Assume that  for any $i=1,\ldots, N$, the $\sK_i$ is the  conjugacy class of a quasi-unipotent matrix. Then
$\underline{M}$ is an algebraic stack of finite type defined over the number field $K$. In particular,  it  has finitely many $0$-dimensional irreducible components. 
\end{proposition}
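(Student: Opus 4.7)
My plan is to realize $\underline{M}$ as a quotient stack of a locally closed subscheme of the representation variety by the conjugation action of $GL(r)$, and then invoke the standard finiteness of irreducible components for Noetherian algebraic stacks.

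Since $X$ is a smooth complex quasiprojective variety, it is homotopy equivalent to a finite CW complex, so $\pi_1^{\rm top}(X,x)$ is finitely presented. Fixing generators $g_1,\ldots,g_s$ and the relations among them, the functor $T \mapsto \Hom(\pi_1^{\rm top}(X,x), GL(r,T))$ is represented by a closed subscheme $R \subseteq GL(r)^s$ of finite type over $\Z$. The condition $\wedge^r \rho = \chi_{\sL}$ cuts out a closed subscheme $R_{\sL}$ defined over $\Q(\mu_d)$. After fixing paths from $x$ to each $x_i$, the local monodromy generator $T_i$ becomes a specific element of $\pi_1^{\rm top}(X,x)$, and evaluation $\rho \mapsto \rho(T_i)$ defines a morphism $R_{\sL} \to GL(r)$. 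Each $\sK_i$, being an orbit under conjugation, is locally closed in $GL(r)$, and is defined over a cyclotomic field since its eigenvalues are roots of unity. Intersecting the preimages of the $\sK_i$ under these evaluation maps yields a locally closed subscheme $R_{\sL,\underline{\sK}} \subseteq R_{\sL}$ of finite type over a common number field $K$ containing $\mu_d$ and all eigenvalues of the $\sK_i$.

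Geometric irreducibility is an open condition: for each $1 \le k < r$, the projection from the closed incidence variety $\{(\rho, V) : \rho(g_j)V \subseteq V \text{ for all } j\} \subseteq R_{\sL,\underline{\sK}} \times \mathrm{Gr}(k,r)$ to $R_{\sL,\underline{\sK}}$ is proper, so its image (the locus of representations admitting an invariant $k$-plane) is closed. Let $R^{\rm irr}_{\sL,\underline{\sK}}$ be the complementary open subscheme. Since isomorphism classes of $T$-local systems of rank $r$ correspond to representations of $\pi_1^{\rm top}(X,x)$ into $GL(r,T)$ up to conjugation, the stack $\underline{M}$ is identified with the quotient stack $[R^{\rm irr}_{\sL,\underline{\sK}} / GL(r)]$ for the conjugation action. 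This exhibits $\underline{M}$ as an algebraic stack of finite type over $K$. Since any algebraic stack of finite type over a Noetherian base has a Noetherian underlying topological space, it has finitely many irreducible components, in particular finitely many $0$-dimensional ones.

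The construction is essentially bookkeeping, with two technical points to check: the locally closed nature of the monodromy conditions (handled by the orbit structure of conjugation on $GL(r)$) and the openness of irreducibility (handled by the properness of the Grassmannian projection). Neither poses a substantive obstacle; the main payoff of the proposition for what follows is the counting statement in the last sentence, which drops out immediately from finite type-ness.
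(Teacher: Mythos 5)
Your argument is essentially the paper's: realize the moduli problem as a quotient of a finite-type representation scheme, observe that the local-monodromy conditions cut out a locally closed subscheme, use the proper Grassmannian projection to show irreducibility is open, and conclude finite type and hence finitely many components. One small imprecision: you write $\underline{M} \cong [R^{\rm irr}_{\sL,\underline{\sK}}/GL(r)]$, but the stack $\underline{M}$ as defined in Section~2 carries the extra datum of an isomorphism $\wedge^r\sV_T \cong \sL\otimes T$, which reduces the automorphism group of an irreducible point from $\mathbb{G}_m$ to $\mu_r$; the paper accordingly quotients by $SL(r)$ rather than $GL(r)$. This is a gerbe-level discrepancy that does not affect finite-type-ness or the component count, so the conclusion is unharmed, but the identification as stated is off by a $\mathbb{G}_m/\mu_r$ rigidification.
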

\begin{proof}
As $\underline{M}$  is a locally closed substack of $ {\sf IrrLoc}(X,r, \sL),$
it suffices to show that the stack  ${\sf IrrLoc}(X,r,\sL)$ is an algebraic stack of finite type.

We recall the classical argument for this fact. First the automorphism group of any $\sV_T$ is $\mu_r(T)$,  which defines the finite constant group scheme $\mu_r$. Since the topological fundamental group of $X$ is a finitely presented group, we equivalently consider a finitely presented group $\Gamma$ and the stack ${\sf IrrRep}(\Gamma, \sL)$ of geometrically irreducible families of $\Gamma$-representations of rank $r$ given with an isomorphism of its determinant  to a fixed  $\sL$. At first we remark that this stack admits a fully faithful morphism to the stack of rank $r$ $\Gamma$-representations ${\sf Rep}(\Gamma, \sL)$ with an isomorphism of its $r$-th exterior power with $\sL:=\sO_T$ on which $\Gamma$ acts by $\chi_{\sL}$. 
 We claim that this morphism is an open immersion. To see this we have to prove that for a $T$-family of $\Gamma$-representations, that is, a rank $r$-representation $\rho\colon\Gamma \to {\rm Aut}(W)$, where $W$ is a locally free $B$-module of rank $r$, together with the determinant condition, there exists an open subset $T_0 \subset T$, such that the $\Gamma$-representation over a geometric point $x$ of $T$ is irreducible, if and only if $x$ is a geometric point of $T_0$. 

The $\Gamma$-representation $\rho$ induces an action of $\Gamma$ on the fibre bundle $\pi\colon \bigsqcup_{k = 0}^r\mathsf{Gr}(W,k) \to T$, where $\mathsf{Gr}(W,k)$ is the Grassmann bundle of $k$-planes in  $W$. We define $T_0$ as the complement of $\pi(\bigsqcup_{k = 0}^r\mathsf{Gr}(W,k)^{\Gamma})$. Since $\pi$ is proper, and the fixed point set is closed, we see that $T_0$ is an open subscheme of $T$. It is clear that for a geometric point $x$ of $T$, the induced representation $\rho_x$ is irreducible if and only if there does not exist an integer $k$, and a $k$-dimensional subspace fixed by $\Gamma$.

Since an open substack of a stack of finite type is also of finite type, we are now reduced to proving that ${\sf Rep}(\Gamma, \sL)$ is an algebraic stack of finite type. To see this we choose a presentation $\Gamma \simeq \langle r_1,\dots, r_e|s_1,\dots,s_f \rangle$ and $L_1, \ldots, L_e \in \mu_r(K)$, where $K$ is a number field,  such that $r_i\mapsto L_i$ defines the character $\chi_\sL$. 
Let $\mathcal{R}(\Gamma, \sL)$ be the affine $K$-variety of finite type, which represents the functor
$$T \mapsto  (A_1,\dots, A_e) \in GL(r)_K^e $$  such that the relations  
${\rm det} (A_j)=L_i$ for $ j=1,\ldots, e$ and 
$ s_i(A_1,\dots, A_e) = 1$ for $ i=1,\dots,f$ hold.
This is by definition an affine variety of finite type over $K$. 
To such a tuple $(A_1,\dots, A_e) $ one attaches the representation of $\Gamma$ on $\sO_T^{\oplus r}$ given by the $A_i$, and the isomorphism $e_1\wedge \ldots \wedge e_r \xrightarrow{} 1$ of the determinant of this representation with $\sL$. This construction induces an equivalence of the quotient stack $[\mathcal{R}(\Gamma, \sL)/SL(r)]$ with 
 ${\sf Rep}(\Gamma, \sL)$. 
Hence 
 ${\sf Rep}(\Gamma, \sL)$  is an algebraic stack of finite type over $K$.
\end{proof}

\begin{remark}
It follows from general theory that the stack $\underline{M}$ has a coarse moduli space. Indeed, according to \cite[Thm.~5.1.5]{ACV03}, there exists an algebraic stack $\underline{M}^{\mu_r}$, such that for any $z \in \underline{M}^{\mu_r}(T)$ we have $\mathsf{Aut}(z) = \{1\}$, and a morphism $\underline{M} \to \underline{M}^{\mu_r}$ which is the universal morphism to an algebraic stack with this property. This implies that for a $K$-scheme $T$,   the groupoid $\underline{M}^{\mu_r}(T)$ is equivalent to a set. Hence the algebraic stack $\underline{M}^{\mu_r}$ is actually (equivalent to) an algebraic space. The universal property of the morphism $\underline{M} \to \underline{M}^{\mu_r}$ shows that $\underline{M}^{\mu_r}$ is a coarse moduli space.
\end{remark}

The $\C$-points corresponding to $0$-dimensional components  are isolated points of the moduli space (so-called {\it rigid local systems}).  Thus they are all defined over a finite extension of $\mathbb{Q}$. 

\begin{proposition} \label{prop:coh}
The Zariski tangent space  $T_{[\sV]}$ at a point $[\sV]\in \underline{M}$  associated to $\sV$  defined over $K$ is  the finite dimensional $K$-vector space 
$H^1(U, a_* \sE nd^0(\sV))$.  In particular, if $H^1(U, a_* \sE nd^0(\sV))=0$, the geometrically irreducible $K$-local system is rigid, and there are finitely many such. 
\end{proposition}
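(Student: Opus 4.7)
The plan is to combine standard deformation theory for representations of $\pi_1^{\mathrm{top}}(X,x)$ with the Leray spectral sequence for $a\colon X \hookrightarrow U$, reducing the tangent-space computation to an identification of the kernel $\ker\bigl(H^1(X,\sE nd^0(\sV)) \to H^0(U, R^1a_*\sE nd^0(\sV))\bigr)$ with $H^1(U,a_*\sE nd^0(\sV))$.

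First, since $\pi_1^{\mathrm{top}}(X,x)$ is finitely presented, isomorphism classes of deformations of the representation $\sV$ over $K[\epsilon]/(\epsilon^2)$ are classified by the group cohomology $H^1(\pi_1^{\mathrm{top}}(X,x),\sE nd(\sV))=H^1(X,\sE nd(\sV))$; imposing the fixed determinant $\sL$ (vanishing of the trace of the deformation cocycle) replaces $\sE nd(\sV)$ by its traceless part $\sE nd^0(\sV)$. Writing a deformation as $\rho_\epsilon(\gamma)=(1+\epsilon\, c(\gamma))\rho(\gamma)$ for a $1$-cocycle $c$, the local generator $T_i$ is sent to $(1+\epsilon\, c(T_i))T_i$, and the infinitesimal condition of remaining in the conjugacy orbit $\sK_i$ amounts to $c(T_i) \in \mathrm{Im}(1-\mathrm{Ad}(T_i))$ (the tangent space to $\sK_i$ at $T_i$, translated to $\mathfrak{gl}_r$). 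This is exactly the statement that the class of $c$ in
\[
H^1(\Delta_i^\times,\sE nd^0(\sV))=\mathrm{coker}\bigl(\mathrm{Ad}(T_i)-1 \colon \sE nd^0(\sV_{x_i}) \to \sE nd^0(\sV_{x_i})\bigr)
\]
vanishes at $y_i$.

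Next apply the Leray spectral sequence for $a\colon X\hookrightarrow U$. Because $U$ has smooth codimension-$1$ complement $\bigsqcup_i(D_i\cap U)$, the sheaf $R^1a_*\sE nd^0(\sV)$ is supported there and restricts on each irreducible (hence connected) component $D_i\cap U$ to a local system whose stalk at $y\in D_i\cap U$ is $H^1(\Delta_i^\times,\sE nd^0(\sV))$. Global sections of a local system on a connected base inject into every stalk, so vanishing of the image at the single chosen point $y_i$ is equivalent to vanishing of the corresponding global section. Hence the five-term exact sequence
\[
0 \to H^1(U, a_*\sE nd^0(\sV)) \to H^1(X, \sE nd^0(\sV)) \to H^0(U, R^1a_*\sE nd^0(\sV)) \to H^2(U, a_*\sE nd^0(\sV))
\]
identifies the tangent space $T_{[\sV]}$ with $H^1(U,a_*\sE nd^0(\sV))$. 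Finite-dimensionality follows because $D_{\mathrm{sing}}$ has codimension $\ge 2$ in $\bar X$, so $Rb_*a_*\sE nd^0(\sV)$ is a constructible complex on the proper variety $\bar X$ with finite-dimensional cohomology. The last assertion is immediate from Proposition~\ref{prop:stack}: vanishing of $T_{[\sV]}$ forces $[\sV]$ to lie on a $0$-dimensional component of $\underline M$, of which there are only finitely many. The main delicate point is matching the algebraic orbit condition ``$\rho_\epsilon(T_i)\in \sK_i$'' with the cohomological vanishing in $H^1(\Delta_i^\times,\sE nd^0(\sV))$, and then exploiting that $R^1a_*\sE nd^0(\sV)$ is a genuine local system (not merely constructible) on each irreducible component of $D\cap U$ to reduce the pointwise condition at the chosen $y_i$ to the vanishing of a global section.
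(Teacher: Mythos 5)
Your proof is correct and rests on the same underlying identification as the paper's, but the packaging is genuinely different. The paper constructs the torsor directly: it takes a cover of $X$ by balls, extracts a cocycle in $\sE nd^0(\sV)$ from the trivializations of the $K[e]$-deformation, checks that the local monodromy constraint forces the restriction of the cocycle to the punctured balls $\Delta_{ij}^\times$ to vanish, and reads off the kernel description $H^1(U,a_*\sE nd^0(\sV))=\ker\bigl(H^1(X,\sE nd^0(\sV))\to\bigoplus_{i,j}H^1(\Delta_{ij}^\times,\sE nd^0(\sV))\bigr)$ by hand. You instead invoke the standard identification of first-order deformations with group cohomology $H^1(\pi_1^{\mathrm{top}}(X,x),\sE nd^0(\sV))$, compute the infinitesimal orbit condition $c(T_i)\in\operatorname{Im}(1-\mathrm{Ad}(T_i))$ explicitly, and then use the five-term exact sequence of the Leray spectral sequence for $a$ together with the observation that $R^1a_*\sE nd^0(\sV)$ is a genuine local system on each connected $D_i\cap U$, so a global section vanishes if and only if it vanishes at a single chosen $y_i$. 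This last point is what reconciles the pointwise condition defining $\underline{M}$ with the sheaf-level vanishing, and it is the same fact the paper relies on implicitly (via the remark in Section~2 that one $y_i$ per component suffices and $D_i\cap U$ is connected). The Leray route is a bit more modular and makes the finite-dimensionality argument transparent via constructibility of $Rb_*a_*\sE nd^0(\sV)$ on the proper $\bar X$; the paper's Čech route has the virtue of being completely self-contained. Both are valid.
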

\begin{proof} We follow Deligne's line of  proof.  Set $K[e]=K[e]/(e^2)$. By definition $T_{[\sV]} = \underline{M}( {\rm Spec}(K[e])$ where $[\sV]= \underline{M}( {\rm Spec}(K[e]\otimes_{K[e]} K)).$  So we want to identify $H^1(U, a_* \sE nd^0(\sV))$ with the set of isomorphism classes of $K[e]$-local systems $\sV_{K[e]}$ of rank $r$,
with an isomorphism $\wedge^r \sV_T\cong \wedge^r \sV\otimes_K T$ and 
with the following conditions: 
 \begin{itemize}
 \item[0)] $\sV_{K[e]} \otimes_{K[e]} K\cong \sV$;
 \item[1)]   for any complex point $y_i\in D_i\cap U$, and $\Delta_i$ a ball around $y_i$, 
 $\sV_{K[e]}|_{\Delta^\times_i} \cong \sV|_{\Delta^\times_i} \otimes_K K[e]$.
\end{itemize}
We have to show that the $K[e]$-local systems with a fixed isomorphism $\wedge^r \sV_T\cong \wedge^r \sV\otimes_K T$ and fulfilling the conditions 
0), 1) form a non-empty  torsor under $a_*\sE nd^0(\sV)$. 
So we choose a cover $X=\cup_\alpha \sX_\alpha$ by balls $\sX_\alpha$. On each $\sX_\alpha$, $\sV_{K[e]}|_{\sX_\alpha}$ and 
$\sV|_{\sX_\alpha}$ are trivialized, thus one has an isomorphism $\sV_{K[e]}|_{\sX_\alpha} \cong \sV|_{\sX_\alpha}\otimes_K K[e]$. The constraint 0)   implies  that on $\sV|_{\sX_\alpha \cap \sX_\beta}$ the composition of one such isomorphism by the inverse of the other one yields an isomorphism  in 
$(\sE nd^0(\sX_\alpha\cap \sX_\beta, \sV), +) =(1+  \sE nd^0(\sX_\alpha\cap \sX_\beta, \sV), \times)  \subset  {\rm Aut}_{K[e]} (\sX_\alpha\cap \sX_\beta, \sV\otimes_K K[e])$. This defines a cocycle, thus a cohomology class  in $
H^1(X, \sE nd^0(\sV))$. The constraint 1)  implies that on $\sX_\alpha\cap \sX_\beta \cap \Delta^\times_i$ the cohomology class defined by the cocycle is trivial. This shows that the cohomology class has values in 
\ga{}{ H^1(U, a_* \sE nd^0(\sV))= {\rm Ker}\big( H^1(X, \sE nd^0(\sV)) \to \oplus_{i=1}^N  \oplus_{j=1}^{j_i} H^1(\Delta_{ij}^\times, \sE nd^0(\sV))\big),\notag}
where for each $i=1,\ldots, N$, we covered $D_i$ by   $\cup_{j=1}^{j_i} \Delta_{ij}\supset D_i$ with $\Delta_{ij}\subset U$. 
 One trivially checks that the change of 
isomorphisms $\sV_{K[e]}|_{\sX_\alpha} \cong \sV|_{\sX_\alpha}\otimes_K K[e]$ changes the cocycle with values in $a_*\sE nd^0(\sV)$ by a coboundary. This finishes the proof of the cohomological part.  The rest follows directly from Proposition~\ref{prop:stack}.

\begin{remark} \label{rmk:IC}
Let $j_{!*} \sV$ be the {\it intermediate} extension on $\bar X$ of a $K$-local system $\sV$ on $X$.  From \cite[Prop.~2.1.11]{BBD82} one derives that  there is an exact triangle   $j_{!*} \sV\to Rb_* a_* \sV \to \sC$  in the bounded derived category of $\bar X$ such that $\sC$ is supported on $D_{\rm sing}$ and  is concentrated in degrees $\ge 2$. Thus it induces an isomorphism on $\H^1$. 
So Proposition~\ref{prop:coh} says $T_{[\sV]}=\H^1(\bar X, j_{!*} \sE nd^0(\sV))$.
 We also remark that if the  local monodromies of $\sV$ along the components of $D$ are finite, then 
 $j_{!*} \sV= j_* \sV, \ j_{!*}\sE nd^0 \sV= j_* \sE nd^0 \sV$.
\end{remark}

\end{proof}

\section{ Proof of Theorem~\ref{thm:int} } \label{sec:proof}

 Let $X $ be a smooth connected quasiprojective complex  variety, $x\in X$ be a geometric point, $j: X\hookrightarrow \bar X$ be a  good compactification,  thus $D=\bar X\setminus X$ is a strict normal crossings divisor. We write $j: X\xrightarrow{a} U=\bar X\setminus D_{\rm sing} \xrightarrow{b} \bar X$.  Let $\rho: \pi^{\rm top}_1(X,x)\to GL(r, \C)$ be a  complex linear representation, defining the local system $\sV$.

We fix a natural number $h$ and 
define the set $\sS(r,d, h)$ consisting of isomorphism classes of rank $r$   irreducible cohomologically rigid complex local systems  $\sV$ on $X$ with determinant of order dividing $d$,  such that the local monodromies at infinity, which are quasi-unipotent,  
have eigenvalues of order dividing $h$. 
 There are finitely many local systems of rank $1$ of order dividing $d$ and once the Jordan type of the 
 unipotent part of the 
monodromy representation along  $D_i$ is fixed, there are finitely many possibilities  for $\sK_i\subset GL(r, \C)$ (see notations of Section~\ref{sec:mod}).  As there are finitely many such Jordan types,  Proposition~\ref{prop:coh} implies that $\sS(r,d, h)$ is finite, 
of cardinality $\frak{N}=N(r,d,h)$. In addition, one has finitely many possibilities for $\sK_i$ and the determinant $\sL$, defining finitely many stacks  $\underline{M}_m$. 
 Let $K_0$ be a number field over which they are all defined are defined. 
 The disjoint union $\underline{N}=\sqcup_m \underline{M}_m$  is a stack of finite type defined over $K_0$.

There is a connected regular scheme $S$ of finite type over $\Z$  with a complex generic point ${\rm Spec} ( \C)\to S$ such that $( j: X \hookrightarrow \bar X, x,  D, D_J)$  is the base change from $S$ to ${\rm Spec}( \C)$  
of  $(j_S: X_S \hookrightarrow \bar X_S, 
x_S,  D_S=\bar X_S\setminus X_S,  D_{J,S} =\cap_{j\in J} D_{j,S})$  with the following properties.  The scheme $\bar X_S$ is smooth projective over $S$, $D_S$ is a relative normal crossings divisor with strata $D_{J,S}$,  $X_S=\bar X_S\setminus D_S$,    $x_S$ is a $S$-point of $X_S$.
This also defines $X_S\xrightarrow{a_S} U_S\xrightarrow{b_S} \bar X_S$.
We say for short that the objects with lower index $S$ are {\it models} over $S$ of the objects without lower index $S$ (which means that the latter ones are defined over $\C$).
\medskip

As the $\sV_i \in \sS(r,d, h)$ are cohomologically rigid,  they are in particular rigid.  As in addition the local monodromies at infinity are quasi-unipotent,  there is a number field $K \subset \C$  containing $K_0$ such that up to conjugacy  the underlying complex linear representations  factor as  $\rho_i: \pi^{\rm top}_1(X,x) \to GL(r, K) \to  GL(r, \C)$,   and such that the rank $1$  local systems ${\rm det}(\rho_i)$  factor through
$ \pi^{\rm top}_1(X,x) \to \mu_d(K)$.  As $\pi_1^{\rm top}(X,x)$ is finitely generated, there is a finite set $\Sigma$ of places of $K$ such that one has a factorization
\ga{}{\rho_i: \pi^{\rm top}_1(X,x) \xrightarrow{\rho_i^0} GL(r, \sO_{K,\Sigma} )  \to GL(r, K) \to  GL(r, \C), \notag} where $\sO_{K,\Sigma} \subset K$ is the ring of $\Sigma$-integers of $K$. 

\medskip

We fix a finite place $\lambda \in {\rm Spec}(\sO_{K,\Sigma})$, dividing  $ \ell \in {\rm Spec}(\Z)$.  We denote by  $K_\lambda$ the completion of $K$ at $\lambda$ and by 
$\sO_{K_\lambda}$  its ring of integers. This defines the representations 
\ga{}{\rho^{\rm top}_{i, \lambda}:  \pi^{\rm top}_1(X,x) \xrightarrow{\rho_i^{0, {\rm top}}} GL(r, \sO_{K,\Sigma} ) \to GL(r, \sO_{K_\lambda}) \notag}
with factorization
\ga{}{ \rho_{i, \lambda}:  \pi^{\rm \acute{e}t}_1(X,x)  \xrightarrow{\rho_{i,\lambda}^0}  GL(r, \sO_{K_\lambda}) \to GL(r, K_\lambda). \notag}
By extension of the ring of coefficients for  Betti cohomology, one has 
\ga{}{ 0=H^1(U,  a_*\sE nd^0(\rho_i))=H^1(U, a_*\sE nd^0(\rho^{0, {\rm top}}_i))\otimes_{\sO_{K,\Sigma}} \C, \notag}
thus $H^1(U, a_*\sE nd^0(\rho^{0, {\rm top}}_i))$ is torsion, 
while by comparison between Betti and \'etale cohomology one has 
\ga{}{0= H^1(U, a_*\sE nd^0(\rho^{0, {\rm top}}_i)) \otimes_{\sO_{K,\Sigma}} K_\lambda = H^1(U, a_*\sE nd^0(\rho_{i,\lambda})). \notag}
Let $\frak{m}_{K_\lambda} \subset \sO_{K_\lambda}$ be the maximal ideal, and 
\ga{}{ \overline{  \rho_{i, \lambda}^0}:  \pi^{\rm \acute{e}t}_1(X,x)  \to GL(r, \sO_{K_\lambda}/\frak{m}_{K_\lambda}) \notag}
be the residual representations. 

\medskip
We choose a closed point  $s\in S$ such that its  characteristic $p$  is prime to 
\begin{itemize} 
\item[]the cardinality of the residual monodromy groups 
$   \overline{ \rho_{i, \lambda}^0}(\pi^{\rm \acute{e}t}_1(X,x) ),$
\item[]
to  $d$,
\item[] to $\ell$, 
\item[] to the residual characteristics of the places in $\Sigma$,
\item[] and to $h$.

\end{itemize}

 There is a {\it specialization homomorphism}
\ga{}{sp: \pi_1^{\rm \acute{e}t}(X,x)\to \pi_1^{\rm \acute{e}t,p'}(X_{\bar s },x_{\bar s}) \notag}
defined by Grothendieck, 
   with target the  prime to $p$ quotient  of  $\pi_1^{\rm{ \acute{e}t}}(X_{\bar s },x_{\bar s})$, which is surjective,  and is an isomorphism on the prime to $p$ quotient of $\pi_1^{\rm \acute{e}t}(X,x)$  (\cite[X, Cor.~2.4]{Gro71}),  \cite[XIII,4.7]{Gro71}).  The complex point ${\rm Spec}(\C)\to S$  factors through the choice of a complex point ${\rm Spec}(\C)\to  T$ where $T={\rm Spec}( \bar \sO_{S,s})$  and  $\bar \sO_{S,s} $ is the strict henselization of $S$ at $s$.
   The  tame \'etale coverings of $X_{\bar s}$  lift to  $X_T$, defining a faithful functor from
   the category of  tame lisse sheaves on $X_{\bar s}$  to lisse sheaves on $X(\C)$.  This functor is an equivalence when restricted to  ``monodromy prime to $p$'' part of the fundamental group. That is,
  omitting the base points,  $sp$  comes from 
    the induced map 
$X\to X_T$ and  base change $
\pi_1^{\rm{ \acute{e}t,p'}}(X_{\bar s }) \xrightarrow{\cong}
 \pi_1^{\rm{\acute{e}t,p'}}(X_{  T  })$.

   The  
  push-out by $sp$ of the 
  homotopy exact sequence \cite[IX, Thm.~6.1]{Gro71}
\ga{hom}{1\to \pi_1^{\rm \acute{e}t}(X_{\bar s}, x_{\bar s})\to  \pi_1^{\rm \acute{e}t}(X_ s, x_s) \to 
\pi^{\rm \acute{e}t}_1(s,\bar s)\to 1\notag}
yields the ``prime to $p$ homotopy exact sequence''
\ga{hom}{1\to \pi_1^{\rm \acute{e}t,p'}(X_{\bar s}, x_{\bar s})\to  \pi_1^{\rm \acute{e}t,p'}(X_ s, x_s) \to 
\pi^{\rm \acute{e}t}_1(s,\bar s)\to 1\notag}
defining $\pi_1^{\rm \acute{e}t,p'}(X_ s, x_s)$.
As ${\rm Ker}(   \rho_{i, \lambda}^0( \pi^{\rm \acute{e}t}_1(X,x))  \to  
\overline{ \rho_{i, \lambda}^0}(\pi^{\rm \acute{e}t}_1(X,x) ))$ is a pro-$\ell$-group, by the choice of $s$,  one has a factorization 
\ga{sp}{
 \rho_{i, \lambda}^0:  \pi^{\rm \acute{e}t}_1(X,x)  \xrightarrow{ sp}  \pi^{\rm \acute{e}t, p'}_1(X_{\bar s},x_{\bar s}) 
\xrightarrow{\rho_{i, \lambda, \bar s}^0}
GL(r, \sO_{K_\lambda}), \notag
}
where 
and $\bar s$ is an $\bar \F_p$-point of $X_s$ above $x_s$. 

   This defines the  diagram
   \ga{}{\xymatrix{ 
 \ar@/_2.0pc/@[][rr]^{j_s}   X_s \ar[r]^{a_s} &  U_s \ar[r]^{b_s} & \bar X_s
     } \notag}
     and above it the  diagram
     \ga{}{\xymatrix{
 \ar@/_2.0pc/@[][rr]^{j_{\bar s}}   X_{\bar s} \ar[r]^{a_{\bar s}} &  U_{\bar s} \ar[r]^{b_{\bar s}} & \bar X_{\bar s}
     } \notag}
     Finally, still by the choice of $s$, one has a factorization
   \ga{}{ {\rm det}(\rho_i): \pi_1^{\rm top}(X,x)\to \pi^{\rm \acute{e}t}_1(X,x) \xrightarrow{sp} \pi^{\rm \acute{e}t,p'}_1(X_{\bar s},x_{\bar s}) \to \mu_d(K).\notag}
  We define  the representation $\rho_{i,\lambda, \bar s}: \pi_1^{\rm \acute{e}t,p'}(X_{\bar s}, x_{\bar s}) \xrightarrow{\rho^0_{i,\lambda, \bar s}} GL(r, \sO_{K_\lambda})  \to 
GL(r, K_\lambda).
$ 
The next proposition is a variant  Simpson's Theorem~\cite[Thm.~4]{Sim92}.
\begin{proposition} \label{prop:simpson} After replacing $s \in S$ by  any point $s' \in S(k')$, with $k(s)\subset k' \subset k(\bar s)$ with degree $s'/s$ sufficiently divisible, 
one has a factorization
\ga{}{\xymatrix{
 \ar[d]_{\rho_{i, \lambda, \bar s}}     \pi^{\rm \acute{e}t,p'}_1(X_{\bar s},x_{\bar s})  \ar[r] & \pi^{\rm \acute{e}t,p'}_1(X_{s},x_{ s}) 
  \ar[dl]^{\rho_{i, \lambda, s}} \\
GL(r, {K_\lambda})
} \notag}
 such that ${\rm det}(\rho_{i,\lambda, s})$ is finite. 
\end{proposition}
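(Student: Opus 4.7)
The plan is to perform a Galois descent using the finiteness of $\sS(r,d,h)$, in the spirit of Simpson's rigidity theorem. The homotopy exact sequence exhibits $\pi_1^{\rm \acute{e}t, p'}(X_{\bar s}, x_{\bar s})$ as a normal subgroup of $\pi_1^{\rm \acute{e}t, p'}(X_s, x_s)$ with quotient $\hat{\Z} = \pi_1^{\rm \acute{e}t}(s,\bar s)$, so conjugation by lifts of elements of $\hat{\Z}$ yields a well-defined action on isomorphism classes of continuous representations of the geometric fundamental group. All invariants defining $\sS(r,d,h)$ are preserved under this action: rank is obviously so, the order of the determinant and the orders of eigenvalues of local monodromies (quasi-unipotence with eigenvalues of order dividing $h$) are unaffected even if the components of $D_{\bar s}$ are permuted by Galois, and cohomological rigidity is an intrinsic property of the isomorphism class.

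By Propositions~\ref{prop:stack} and~\ref{prop:coh}, $\sS(r,d,h)$ is finite of cardinality $\frak{N}$, so the $\hat{\Z}$-action on this finite set factors through a finite quotient of order dividing $\frak{N}!$. Replacing $s$ by a point $s'$ of $S$ whose residue extension has degree divisible by this order, every class in $\sS(r,d,h)$, in particular $[\rho_{i,\lambda,\bar s}]$, is fixed by $\pi_1^{\rm \acute{e}t}(s',\bar s)$. Invariance provides, for each $\phi$ in the arithmetic Galois group, an element $g_\phi \in GL(r, K_\lambda)$ intertwining $\rho_{i,\lambda,\bar s}$ with its $\phi$-conjugate; Schur's lemma applied to the irreducible $\rho_{i,\lambda,\bar s}$ makes $g_\phi$ unique modulo $K_\lambda^*$. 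The assignment $\phi \mapsto [g_\phi]$ assembles into a continuous projective representation $\bar\rho_{i,\lambda,s}: \pi_1^{\rm \acute{e}t, p'}(X_{s'}, x_{s'}) \to PGL(r, K_\lambda)$ extending the projectivization of $\rho_{i,\lambda,\bar s}$.

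To produce a linear lift $\rho_{i,\lambda,s}$ of $\bar\rho_{i,\lambda,s}$ with finite-order determinant, I would first extend $\det\rho_{i,\lambda,\bar s}: \pi_1^{\rm \acute{e}t, p'}(X_{\bar s}) \to \mu_d(K_\lambda)$ to a character $\chi_s$ of $\pi_1^{\rm \acute{e}t, p'}(X_{s'})$ of finite order. After a further finite base change rendering this determinant Galois-invariant, such an extension exists because $\hat{\Z}$ has cohomological dimension one for continuous cohomology with finite coefficients, so $H^2(\hat{\Z}, \mu_d) = 0$. Next, one constructs a linear lift of $\bar\rho_{i,\lambda,s}$ compatible with $\chi_s$; the obstruction sits in a suitable $H^2$ group, and a Hochschild--Serre analysis applied to the homotopy exact sequence, combined with the existence of the compatible geometric lift $(\rho_{i,\lambda,\bar s}, \det\rho_{i,\lambda,\bar s})$ and the cohomological dimension of $\hat{\Z}$, reduces the obstruction to a torsion class killed after one more finite base change on $s'$.

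I expect the chief technical obstacle to be the coherent organization of these successive finite base changes and the cohomological lifting, ensuring that the entire construction remains within the fixed local field $K_\lambda$ rather than slipping into an extension. Once the linear lift $\rho_{i,\lambda,s}$ is in hand, its restriction along the natural map $\pi_1^{\rm \acute{e}t, p'}(X_{\bar s}) \to \pi_1^{\rm \acute{e}t, p'}(X_{s'})$ recovers $\rho_{i,\lambda,\bar s}$ by design, yielding the required factorization with finite determinant.
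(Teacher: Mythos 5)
Your proposal follows the same overall strategy as the paper's proof (itself a variant of Simpson's \cite[Thm.~4]{Sim92}): split the homotopy exact sequence at the rational point $x_s$, use finiteness and rigidity to make the class of $\rho_{i,\lambda,\bar s}$ Galois-invariant after a finite base change, pass to a projective arithmetic representation via Schur's lemma for the irreducible $\rho_{i,\lambda,\bar s}\otimes \bar K_\lambda$, and then lift. Two points deserve scrutiny.

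First, and most importantly, the claim that ``the orders of eigenvalues of local monodromies \dots are unaffected even if the components of $D_{\bar s}$ are permuted by Galois'' is not a formality: it is precisely the content of Lemma~\ref{lem:unip}, which occupies most of the paper's proof of Proposition~\ref{prop:simpson}. A lift $g$ of Frobenius does not merely permute components; it acts on the prime-to-$p$ tame inertia along each component via the cyclotomic character, replacing the generator $sp(\hat U)$ by $sp(\hat U)^{c_n}$ for a $\hat\Z^{(p')}$-unit $(c_n)$. To see that $\rho^g_{i,\lambda}$ still lands in $\underline N(K_\lambda)$, the paper reduces to a complete-intersection curve through the marked points, introduces tangential base points to access the local arithmetic structure, and invokes closedness of the set of quasi-unipotent matrices with eigenvalue order dividing $h$ and determinant order dividing $d$ in $GL(r,K_\lambda)$. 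Without this verification, the $\hat\Z$-action on $\sS(r,d,h)$ (equivalently on $\underline N(K_\lambda)$) that your finite-orbit argument presupposes is not known to be well defined, and the proof has a genuine hole.

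Second, your lifting step diverges from the paper's and is left at the level of a sketch, as you yourself acknowledge. You first extend $\det\rho_{i,\lambda,\bar s}$ using $H^2(\hat\Z,\mu_d)=0$, then propose to kill the residual obstruction in $H^2(\pi_1^{\rm \acute{e}t,p'}(X_{s'}),\mu_r)$ via Hochschild--Serre and a further base change. Since $\mathrm{cd}\,\hat\Z\le 1$ for finite coefficients, the relevant residual class sits in $H^1(\hat\Z, H^1(\pi_1^{\rm \acute{e}t,p'}(X_{\bar s}),\mu_r))$, a finite module, and one must argue that restriction to a deep enough open subgroup of $\hat\Z$ (a norm map) annihilates it. This can likely be organised, but the paper instead uses Deligne's more structural device: the subgroup $G\subset GL(r,K_\lambda)$ of matrices with determinant of order dividing $d$ maps finite \'etale onto its image in $PGL(r,K_\lambda)$, so after one more finite base change the projective representation restricted to the open subgroup $\Pi$ lifts uniquely to $G$ compatibly with $\rho_{i,\lambda,\bar s}$ on the geometric fundamental group. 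The paper even remarks that its earlier version of the argument, based on vanishing of the Brauer group of $k(s)$ and class field theory, was of the same cohomological flavour as yours; Deligne's variant was preferred because it avoids the bookkeeping and works over arbitrary base fields.
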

\begin{proof} 
The representation $\rho_{i,\lambda}$, or equivalently the representation $\rho_{i,\lambda, \bar s}$,
  defines a $K_\lambda$-point  $[\rho_{i,\lambda}]\in \underline{M}(K_\lambda)$. 
The point $x_s\in X_s$ is rational, thus splits the prime to $p$ homotopy exact sequence. We still denote by $g$ the lift to  $\pi_1^{\rm \acute{e}t}(X_s, x_s)$ of an element in $\pi_1^{\rm \acute{e}t}(s, \bar s)$.
For such a  $g$, we define the representation
\ga{}{ \rho^g_{i,\lambda}: \pi_1^{\rm \acute{e}t}(X,x)\to GL(r, K_\lambda) \notag \\
\gamma\mapsto \rho_{i,\lambda, \bar s}(g\cdot sp(\gamma) \cdot g^{-1}). \notag}
\begin{lemma} \label{lem:unip}
 $\rho_{i,\lambda}^g \in \underline{N}(K_\lambda)$. 
 \end{lemma}
 \begin{proof}
 We have to prove that the determinant of $\rho_{i,\lambda}^g$ 
 has order  dividing $d$, and that the monodromies along the components of $D$ at infinity 
 are quasi-unipotent with order of the eigenvalues dividing $h$.

We first show we may assume that $X$ is a curve. 
We take in $\bar X$ a smooth projective  curve  $\bar C$  which is a complete intersection of ample divisors, all of which containing $x$ and all the $y_\iota$,  in good position with respect to $D$.
 In particular, the curve $C$ contains $x$ and  the points $y_\iota$.  One may assume that $\bar C$ is defined over $S$, and that $y_{\iota,S}$ is a $S$-point of $D_{\iota,S}$.  Let $C=\bar C\cap X$. Via the  surjections $\pi^{\rm top}_1(C,x)\to  \pi_1^{\rm top}(X,x)$, $\pi^{\rm \acute{e}t}_1(C,x)\to  \pi_1^{\rm \acute{e}t}(X,x)$ inducing via the specialization the surjection
$\pi^{\rm \acute{e}t}_1(C_{\bar s},x_{\bar s})\to  \pi_1^{\rm \acute{e}t}(X_{\bar s},x_{\bar s})$, one reduces  the statement to the case where $X$ has dimension $1$.  

Let $k_{y_{\iota, s}}$  be the local field  which is  
 the field of fractions of the complete ring $\sO_{\bar X_s, y_{\iota,s}}$  of $\bar X_s$ at $y_{\iota, s}$. 
 A uniformizer $t$ of $\sO_{\bar X_s, y_{\iota,s}}$ yields an identification of the category of finite \'etale prime to $p$  extensions of 
 $k_{y_{\iota, s}}$  with the category of finite \'etale prime to $p$ covers of $\G_m$
  (see 
 \cite[Thm.~1.41]{Kat86} and \cite[Section~15]{Del89} for the theory of  tangential base points at infinity).  The rational point $1$ of $\G_m$ defines a fiber functor $1_t$, thus
  yields the prime to $p$ homotopy exact  exact sequence 
\ga{}{ 1\to \pi_1^{\rm \acute{e}t,p'}(k_{y_{\iota, \bar s}}, 1_t) \to \pi_1^{\rm \acute{e}t,p'}(k_{y_{\iota,  s}}, 1_t) \to 
\pi_1^{\rm \acute{e}t}(s, \bar s) \to 1 \notag}
for $k_{y_{\iota,  s}}$, together  
with  a splitting of $\pi_1^{\rm \acute{e}t}(s, \bar s)$ in  $\pi_1^{\rm \acute{e}t,p'}(k_{y_{\iota,  s}}, 1_t)$. 
It maps to the prime to $p$ homotopy exact  exact sequence 
\ga{}{ 1\to \pi_1^{\rm \acute{e}t,p'}(X_{\bar s}, 1_t) \to \pi_1^{\rm \acute{e}t,p'}(X_s, 1_t) \to 
\pi_1^{\rm \acute{e}t}(s, \bar s) \to 1 \notag}
for $X_s$ based at $1_t$, 
where $1_t$ is viewed as  a tangential base point on $X_s$.  An equivalence $\theta_s$ of fiber functors between $1_t$ and $x_{\bar s}$ for the category of  finite \'etale prime to $p$ covers of $X_s$ yields a  isomorphism of exact sequences 
from the  prime to $p$ homotopy exact  sequence  for $X_s$ based at $1_t$ with the one based at $x_{\bar s}$, compatibly with the section.  Thus conjugacy by $g$ stabilizes the image of 
$\pi_1^{\rm \acute{e}t,p'}(k_{y_{\iota, \bar s}}, 1_t) \to \pi_1^{\rm \acute{e}t,p'}(X_{\bar s}, 1_t) 
\xrightarrow{\theta_s} \pi_1^{\rm \acute{e}t,p'}(X_{\bar s}, x_{\bar s}).$  
Let $\sO_{\bar X, y_{\iota}}$  be the complete ring $\sO_{\bar X, y_{\iota}}$  of $\bar X$ at $y_{\iota}$, and  $k_{y_\iota}$ be its field of fractions. The specialization homomorphism for the fundamental groups based at $1_t$ sends 
$\pi_1^{\rm \acute{e}t}(k_{y_{\iota}}, 1_t)$  to $\pi_1^{\rm \acute{e}t,p'}(k_{y_{\iota, \bar s}}, 1_t)$, and the profinite completion of $\pi_1(\Delta^\times_\iota, 1_t)$ is  $\pi_1^{\rm \acute{e}t}(k_{y_{\iota}}, 1_t)$. Here we abused notations: $\G_m$ over $s$ lifts to $\G_m$ over $T$ then over $\C$, as well as the uniformizer $t$. We used the same notation $1_t$ for the base point on those lifts, and used that the category  of finite \'etale extensions  (resp. topological covers)
 of $k_{y_{\iota}}$  (resp. $\Delta^\times_\iota$) is equivalent to the corresponding one on $\G_m$.  Finally, identifying
 $\G_m(\C)$ with $\Delta_\iota^\times$ identifies $1_t$ and $x_\iota$ with two points in $\Delta_\iota^\times$. We choose a path $\theta_\iota$ between the two in
  $\Delta_\iota^\times$, and 
  a path  $\theta$  between $1_t$ and $x$ on $X$. This defines isomorphisms $\pi_1(\Delta^\times_\iota, 1_t)\xrightarrow{\theta_\iota} \pi_1(\Delta^\times_\iota, x_\iota)$ and $\pi_1(X, 1_t)\xrightarrow{\theta} \pi_1(X, x)$. Set
  $U= \theta_\iota^{-1}(T_\iota)$. 
  Then by assumption $\rho_i \circ \theta(U)\in \sK_\iota$. 
   Let $\hat U$ be its image  via the profinite completion  $\pi_1(\Delta_\iota^\times, 1_t)\to
  \pi_1^{\rm \acute{e}t}(k_{y_{\iota}}, 1_t) $.  Summarizing the information we have 
  \ga{}{\rho_{i,\lambda}^g\circ \theta(\hat U)=  \rho_{i, \lambda,\bar s}  (g\cdot sp( \theta(\hat U)) \cdot g^{-1})=
  \rho_{i,\lambda, \bar s} \circ \theta_s (g\cdot sp(\hat U) \cdot g^{-1})\notag}
  and
  \ga{}{ g\cdot sp(\hat U) \cdot g^{-1} \in \pi_1^{\rm \acute{e}t,p'}(k_{y_{\iota, \bar s}}, 1_t).
   \notag}
   As $sp(\hat U)$ is a topological generator of $\pi_1^{\rm \acute{e}t,p'}(k_{y_{\iota, \bar s}}, 1_t)\cong \hat \Z^{(p')}$,
  there is an element $(c_n) \in \hat \Z^{(p')}$, with $c_n \in \Z/n$, such that 
 $ g\cdot sp(\hat U) \cdot g^{-1} =sp (\hat U)^{c_n} \in \pi_1^{\rm \acute{e}t,p'}(k_{y_{\iota, \bar s}}, 1_t)\in \Z/n$.  As the subset  $A$ of matrices in $GL(r, K_\lambda)$ of quasi-unipotent matrices with order of the eigenvalues dividing $h$  and the order of the determinant dividing $d$ is closed, and  
$ \rho_{i,\lambda, \bar s} \circ \theta_s (sp(\hat U)^{c_n} )\in A$, one has $\rho_{i,\lambda}^g\circ \theta(\hat U) \in A$. 
Thus  the representation $\rho^{g}_{i,\lambda}$
defines a point $ [\rho^{g}_{i,\lambda}]\in \underline{N}(K_\lambda).$
\end{proof}
 The  map 
\ga{}{\pi_1^{\rm \acute{e}t}(s,\bar s)\to \underline{N}(K_\lambda), \ g\mapsto  [\rho^{g}_{i,\lambda}]\ \notag}
 is continuous for the profinite topology on $\pi_1^{\rm \acute{e}t}(s,\bar s)$
  and the $\lambda$-adic topology on $ \underline{N}(K_\lambda)$.
 As 
$[\rho_{i,\lambda}]\in \underline{N}(K_\lambda)$ is isolated, there is an open subgroup of $\pi^{\rm \acute{e}t}_1(s,\bar s)$  on which the map  is constant with image $[\rho_{i,\lambda}]$. This defines a point $s'$ with $\bar s\to s'\to s$ with  $\pi_1^{\rm \acute{e}t}(s', \bar s)$ being this open subgroup. We  abuse notations and set $s=s'$.  

\medskip

 Let $K_\lambda \subset \bar K_\lambda$ be an algebraic closure.
The representation  $\rho_{i,\lambda} \otimes \bar K_\lambda$  is  irreducible as $\rho_i$ is irreducible over $\C$. Thus the equation  $[\rho^g_{i, \lambda}]= [\rho_{i, \lambda}] \in \underline{N}(K_\lambda)$ 
implies that there is a $T(g)\in GL(r, K_\lambda)$ such that   $\rho_{i, \lambda, \bar s}(g\gamma g^{-1})=T(g) \rho_{i,\lambda, \bar s}(\gamma) T(g)^{-1}  \in GL(r, K_\lambda)$ for all $\gamma \in  
\pi_1^{\rm \acute{e}t}(X_{\bar s}, x_{\bar s})$, and  moreover $T(g)$ is uniquely defined up to multiplication by a scalar in $K_\lambda^\times$.  The so defined  map 
$\pi_1^{\rm \acute{e}t}(s,\bar s)\to   PGL(r, K_\lambda), \ g \mapsto \bar T(g)$, where $\bar T(g)$ is the image of $T(g)$,  is continuous for the profinite topology on $\pi_1^{\rm \acute{e}t}(s,\bar s)$ and the $\lambda$-adic topology on $PGL(r, K_\lambda)$.
Writing $\pi_1^{\rm \acute{e}t, p'}(X_s,x_s)$   as a semi-direct product of $\pi_1^{\rm \acute{e}t}(s,\bar s)$
by $\pi_1^{\rm \acute{e}t, p'}(X_{\bar s},x_{\bar s})$, 
we define $p\rho_{i,\lambda, s}: \pi_1(X_s, x_s)\to PGL(r, K_\lambda)$ by sending $(\gamma \cdot g)$ to 
$\rho_{i,\lambda,\bar s}(\gamma)\cdot \bar T (g)$.  It remains to lift $p\rho_{i,\lambda, s}$ to $\rho_{i,\lambda, s}$
as in the proposition. Our initial argument consisted in saying that the Brauer obstruction to the lift dies as $k(s)$ is finite, and  in then  using class field theory \cite[Prop.~1.3.4]{Del80}
 to ensure that the so constructed representation has finite determinant after twist by a character of $k(s)$.  We present here Deligne's argument which has the advantage to work on all base fields, but necessitates a new change of $s$. The representation $\rho_{i,\lambda, \bar s}$ has values in the subgroup $G\subset GL(r, K_\lambda)$ consisting of the elements with order $d'$ determinant, where $d'$ is any divisor of $d$. The composite  homomorphism $G\to GL(r, K_{\lambda}) \to PGL(r, K_\lambda)$ is finite \'etale onto its image.  Thus again base changing $s$ to $s'\to s$ finite with $\bar s\to s'$, the pull-back $\Pi$ of $\pi_1^{\rm \acute{e}t}(s', \bar s)$ in $\pi_1^{\rm \acute{e}t}(X_s, x_s)$ is open and the restriction of $p\rho_{i,\lambda,s}$ to  $\Pi$  lifts to $G$ 
 in a unique way so that on $\pi_1^{\rm \acute{e}t}(X_{\bar s}, x_{\bar s})$, it  is precisely $\rho_{i,\lambda, \bar s}$.

\end{proof}
We denote the lisse sheaves associated to $\rho_{i,\lambda}$  resp. $\rho_{i,\lambda,\bar s}$ resp. $\rho_{i,\lambda,  s}$
 by  $\sV_{i,\lambda}$  resp. $\sV_{i,\lambda, \bar s}$ resp. $\sV_{i,\lambda,  s}$.

 \begin{lemma} \label{lem:ext}
 The lisse sheaves $ \sV_{i,\lambda, \bar s}$, resp. $ \sV_{i,\lambda,  s}$
 are tame,
 have quasi-unipotent monodromies at infinity, and the order of the eigenvalues of the  local monodromies at infinity 
 divides $h$. 
 
  \end{lemma}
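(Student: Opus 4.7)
\emph{Proof plan.} The strategy is to observe that tameness is automatic from the factorizations already established, and then to identify the image of a generator of local inertia at each component of $D_{\bar s}$ with the matrix $\rho_i(T_\iota)$ via specialization and the tangential base point comparison already set up in the proof of Proposition~\ref{prop:simpson}.

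For tameness, both $\rho_{i,\lambda,\bar s}$ and $\rho_{i,\lambda,s}$ factor through the prime-to-$p$ quotients $\pi_1^{\rm \acute{e}t,p'}(X_{\bar s},x_{\bar s})$ and $\pi_1^{\rm \acute{e}t,p'}(X_s,x_s)$ respectively. Since at any point of $D_{\bar s}$ (resp.\ $D_s$) the wild inertia is a pro-$p$-group, it lies in the kernel of the projection to the prime-to-$p$ quotient and hence acts trivially. Thus $\sV_{i,\lambda,\bar s}$ and $\sV_{i,\lambda,s}$ are tame along the boundary.

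For the local monodromies, I would first reduce to the case where $X$ is a curve by intersecting $\bar X$ with a complete intersection curve $\bar C$ through $x$ and the $y_\iota$ exactly as in the proof of Proposition~\ref{prop:simpson}; the induced surjection on prime-to-$p$ fundamental groups is compatible with specialization and sends local inertia at each boundary point of $C$ to the local inertia along the corresponding component of $D$. On a curve, the tangential base point analysis recalled inside that proof identifies the profinite completion $\hat U$ of the topological generator $T_\iota$ of $\pi_1(\Delta_\iota^\times,1_t)$ with a topological generator of the tame local inertia $\pi_1^{\rm \acute{e}t,p'}(k_{y_{\iota,\bar s}},1_t)\cong \hat\Z^{(p')}$ inside $\pi_1^{\rm \acute{e}t,p'}(X_{\bar s},1_t)$. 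Since $\rho_{i,\lambda,\bar s}\circ sp=\rho_{i,\lambda}$, the image of this generator under $\rho_{i,\lambda,\bar s}$ equals, up to conjugation, $\rho_i(T_\iota)\in GL(r,\C)\subset GL(r,K_\lambda)$; by hypothesis this matrix is quasi-unipotent with eigenvalues of order dividing $h$, and both properties persist under the embedding $GL(r,\sO_{K,\Sigma})\hookrightarrow GL(r,K_\lambda)$. For $\sV_{i,\lambda,s}$, a topological generator of local inertia at a component of $D_s$ lies in the image of $\pi_1^{\rm \acute{e}t,p'}(k_{y_{\iota,\bar s}},1_t) \to \pi_1^{\rm \acute{e}t,p'}(X_{\bar s},1_t)\to \pi_1^{\rm \acute{e}t,p'}(X_s,1_t)$ via the two horizontal homotopy exact sequences, so its image under $\rho_{i,\lambda,s}$ coincides, up to conjugation, with the image under $\rho_{i,\lambda,\bar s}$ of a local inertia generator, and the previous paragraph applies.

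The only non-bookkeeping step is the comparison matching $T_\iota$ with a topological generator of tame étale inertia after specialization; as this has already been carried out during the proof of Proposition~\ref{prop:simpson}, no new input is required beyond recording that the output is quasi-unipotent with the prescribed eigenvalue orders.
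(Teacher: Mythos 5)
Your proof is correct and follows essentially the same route as the paper: tameness comes from the factorization through $\pi_1^{\rm \acute{e}t,p'}$ (wild inertia is pro-$p$), and the statement about quasi-unipotence with eigenvalue orders dividing $h$ is exactly the $g=1$ case of the tangential-base-point computation already carried out inside the proof of Proposition~\ref{prop:simpson}, which the paper simply cites. You have spelled out more of the details that the paper leaves implicit (the curve reduction, the identification $\rho_{i,\lambda,\bar s}\circ sp = \rho_{i,\lambda}$, the restriction from $X_s$ to $X_{\bar s}$), but the argument is the same.
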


 \begin{proof}
 Since  $ \sV_{i,\lambda, \bar s}$ is defined by  a  representation of $\pi_1^{\rm \acute{e}t,p'}(X_{\bar s}, x_{\bar s})$  it is tame, so so is $ \sV_{i,\lambda,  s}$.
 We denote by  $\Gamma_{i,\lambda}\subset GL(r, \sO_{K_\lambda})$ the monodromy group of $\sV_{i,\lambda}$, which is  the one of $\sV_{i,\lambda,\bar s}$.   The rest of the statement is proven in the proof of 
Proposition~\ref{prop:simpson}.

 \end{proof}
 Fix any prime number $\ell'\neq p$ and denote by $n$ the dimension of $X$.
\begin{lemma} \label{lem:weights}  Let $\sA$ be a pure tame $\bar \Q_{\ell'}$-lisse sheaf of weight $0$ on $X_s$. 
Then
$$H^{1}(\bar X_{\bar s}, j_{ \bar s !*}\sA)=H^{1}(U_{\bar s}, a_{\bar s*}\sA)$$ and 
 is the $\bar \Q_{\ell '}$-sub vector space of $\oplus_{j\in \N} H^j(X_{\bar s}, \sA)$ consisting of all the elements of weight precisely $1$.
\end{lemma}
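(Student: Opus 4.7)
The plan is as follows. For the first equality, I would invoke Remark~\ref{rmk:IC} directly: the cone of the canonical morphism $j_{\bar s !*}\sA \to Rb_{\bar s *}a_{\bar s *}\sA$ is supported on $D_{\mathrm{sing}, \bar s}$ and concentrated in degrees $\ge 2$, so it induces an isomorphism on $H^1$, while the Leray spectral sequence for $b_{\bar s}$ gives $H^{1}(\bar X_{\bar s}, Rb_{\bar s *}a_{\bar s *}\sA) = H^{1}(U_{\bar s}, a_{\bar s *}\sA)$.

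For the weight characterization, my approach is first to establish purity of $H^{1}(U_{\bar s}, a_{\bar s *}\sA)$ and then to compare it with $H^{1}(X_{\bar s}, \sA)$. For purity, I would view $\sA[n]$ (with $n = \dim X$) as a pure perverse sheaf of weight $n$ on $X_{\bar s}$. By \cite{BBD82} the intermediate extension $j_{\bar s !*}(\sA[n])$ is then pure of weight $n$ on the \emph{proper} variety $\bar X_{\bar s}$, and Deligne's weight theorem for cohomology of proper varieties implies that $H^{1}(\bar X_{\bar s}, j_{\bar s !*}\sA)$ is pure of weight $1$.

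For the comparison, I would apply the Leray spectral sequence for the open immersion $a_{\bar s}$ to obtain the beginning of the five-term exact sequence
\begin{equation*}
0 \to H^{1}(U_{\bar s}, a_{\bar s *}\sA) \to H^{1}(X_{\bar s}, \sA) \to H^{0}(U_{\bar s}, R^{1} a_{\bar s *}\sA).
\end{equation*}
Since $D_{\bar s} \cap U_{\bar s}$ is a smooth divisor in $U_{\bar s}$ and $\sA$ is tame, a local computation of stalks (via $H^{1}$ of a punctured disc, whose prime-to-$p$ fundamental group is $\hat{\Z}'(1)$) identifies $R^{1} a_{\bar s *}\sA$ on each component $D_i \cap U_{\bar s}$ with the Tate-twisted sheaf of coinvariants $(\sA|_{D_i \cap U_{\bar s}})_{T_i}(-1)$. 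Since $\sA$ is pure of weight $0$, this is a lisse sheaf pure of weight $2$, and therefore so is $H^{0}(U_{\bar s}, R^{1} a_{\bar s *}\sA)$.

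Combining the two: the injection $H^{1}(U_{\bar s}, a_{\bar s *}\sA) \hookrightarrow H^{1}(X_{\bar s}, \sA)$ has source pure of weight $1$, hence its image lies in the weight-$1$ part; conversely, every weight-$1$ class in $H^{1}(X_{\bar s}, \sA)$ must map to zero in the pure weight-$2$ space $H^{0}(U_{\bar s}, R^{1} a_{\bar s *}\sA)$ and therefore comes from $H^{1}(U_{\bar s}, a_{\bar s *}\sA)$. Deligne's weight bounds on smooth varieties finally confine weight-$1$ classes in $\bigoplus_{j} H^{j}(X_{\bar s}, \sA)$ to $H^{1}$, since $H^{j}$ has weights in $[j, 2j]$. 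I expect the main subtlety to be the local identification of $R^{1} a_{\bar s *}\sA$ together with its purity of weight $2$, which crucially relies on tameness of $\sA$ and on smoothness of the boundary strata $D_i \cap U_{\bar s}$, both built into the set-up.
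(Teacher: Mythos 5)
Your proposal follows essentially the same route as the paper: first equality via Remark~\ref{rmk:IC}, confinement of weight~$1$ classes to $H^1(X_{\bar s},\sA)$ via Deligne's bounds, the five-term exact sequence for $a_{\bar s}$, purity of $H^1(\bar X_{\bar s}, j_{\bar s!*}\sA)$ of weight~$1$ via Gabber's purity for intermediate extensions, and a weight estimate on $R^1 a_{\bar s*}\sA$ coming from tameness and the Tate twist by $(-1)$.

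There is, however, one genuine inaccuracy in the local weight analysis. You assert that $R^1 a_{\bar s*}\sA$, identified along $D_i\cap U_{\bar s}$ with the Tate-twisted coinvariants $(\sA)_{T_i}(-1)$, is \emph{pure} of weight $2$ because $\sA$ is pure of weight $0$. This is false whenever the unipotent part of the local monodromy $T_i$ is nontrivial. In that case the monodromy filtration on the nearby-cycle stalk coincides with the weight filtration (Weil II), the invariants $\sA^{T_i}$ have weights $\le 0$, the coinvariants $\sA_{T_i}$ have weights $\ge 0$, and the graded pieces of higher index contribute strictly positive weights. A rank-$2$ Jordan block already gives coinvariants of weight $1$, hence weight $3$ after the $(-1)$-twist. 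The correct statement, and the one the paper uses, is that $R^1 a_{\bar s*}\sA$ has weights $\ge 2$ at closed points; that lower bound is all the argument needs to conclude $H^0(U_{\bar s}, R^1 a_{\bar s*}\sA)$ has weight $\ge 2$ and hence cannot receive a weight-$1$ class. So your argument survives once the word ``pure'' is replaced by ``with weights $\ge 2$,'' but as written the purity claim is incorrect and its implicit justification (purity of $\sA$ implies purity of the coinvariants) conflates coinvariants with a graded piece of the weight filtration.
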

\begin{proof}

 The proof of Remark~\ref{rmk:IC} yields over $X_s$ the relation $H^{1}(\bar X_{\bar s}, j_{\bar s !*}\sA)=H^{1}(U_{\bar s}, a_{\bar s*}\sA)$.
 The weight of $H^{0}(X_{\bar s}, \sA)$ is $0$,  and for any $j$, 
 the weight of $H^j(X_{\bar s}, \sA)$ is $\ge j$ (see \cite[Thm.~3.3.1]{Del80}). Thus the weight $1$ part of 
 $\oplus_{j\in \N} H^j(X_{\bar s}, \sA)$ lies in
 $H^{1}(X_{\bar s}, \sA)$. 
One has a short   $\pi_1^{\rm \acute{e}t}(s, \bar s)$ equivariant exact sequence
\ga{}{ 0\to H^1(\bar X_{\bar s},  j_{\bar s !*} \sA)  \to H^1(X_{\bar s}, \sA)\to H^0(U_{\bar s}, R^1a_{\bar s *} \sA).\notag}
The group  $H^1(\bar X_{\bar s},  j_{\bar s !*} \sA) $ is pure of weight $1$ while $R^1a_{\bar s *} \sA$ has weights $\ge 2$ at closed points,  which is seen on curves, and on them  $\sA$ is tame (\cite[Thm.~1.1]{KS10}), thus  the local inertia $\Z_\ell(1)$ acts at the punctures at infinity. 
Thus a fortiori the weight of $H^0(U_{\bar s}, R^1a_{\bar s *} \sA)$ is $\ge 2$. 
 This finishes the proof.

\end{proof}

\begin{proof}[Proof of Theorem~\ref{thm:int}]
 We fix an embedding $K_{\lambda}\subset \bar \Q_{\ell}$.
 Let $\sigma: \bar \Q_\ell \to \bar \Q_{\ell'}$ be a field isomorphism, where $\ell'\neq p$. By Drinfeld's theorem \cite[Thm.~1.1]{Dri12}, there is a $\sigma$-companion $\sV_{i,\lambda, s}^\sigma$ to $ \sV_{i,\lambda,s}$. 
 By definition of the indexing, for  $i\in \{1,\ldots, \frak{N}\}$, the  complex local systems $\sV_i$   are irreducible and pairwise  non-isomorphic.   Thus  by definition, the $\bar \Q_\ell$ lisse sheaves $\sV_{i,\lambda,  s}$ 
   are irreducible and pairwise non-isomorphic.   If  $\sV_{i,\lambda, \bar s}^\sigma$ was not irreducible, it would split  after a finite base change  $s'\to s$ with $\bar s\to s'\to s$, thus on $X_{s'}$ , thus 
 the $\sigma^{-1}$-companion of $\sV^\sigma_{i,\lambda,  s'}$, which is $\sV_{i,\lambda, s'}$,  would split as well, a contradiction.  Likewise, if $\sV^{\sigma}_{i,\lambda, \bar s}$ is isomorphic to $\sV^{\sigma}_{j,\lambda, \bar s}$, since $H^0(X_{\bar s}, (\sV^{\sigma}_{i,\lambda, \bar s})^\vee \otimes \sV^{\sigma}_{j,\lambda, \bar s})$ has weight $0$, the isomorphism is defined over $X_s$, thus $i=j$.  Thus the $\bar \Q_\ell$ lisse sheaves $\sV_{i,\lambda,  \bar s}$ are irreducible and pairwise non-isomorphic.  
  In addition, since ${\rm det}(\sV_{i,\lambda, \bar s})^\sigma= {\rm det}(\sV^\sigma_{i,\lambda,\bar s}) $ is constructed by post-composing the $K_\lambda^\times$ character by $\sigma$, it has order precisely $d$.

 \medskip

 As $\sV_{i,\lambda, s}$ and $ \sV_{i,\lambda,s}^\sigma$ are pure of weight $0$, 
 so are $\sA_{is}=\sE nd^0( \sV_{i,\lambda, s})$ and $\sA_{is}^\sigma=\sE nd^0( \sV_{i,\lambda, s}^\sigma).$
 \medskip
 By local acyclicity  \cite[Lem.~3.14]{Sai17} applied to $X_T\to T$ used to define the specialization homomorphism, 
 \ga{}{ H^1(U_{\bar s},  a_{{\bar s}*} \sA_{i, {\bar s}} ) \to  H^1(U,  a_{*}\sA_{i }) \notag}
  is an isomorphism. Thus   $H^1(\bar X_{\bar s}, j_{!*}\sA_{i \bar s})=0.$    
  
  The $L$- functions  $L(X_{\bar s},  \sA_{i,s})$   and  
$L(X_{\bar s},  \sA_{i,s}^\sigma)$  defined by a product formula are equal  (\cite[5.2.3]{Del73}).  
In particular,  for any natural number $w$,  the pure weight $w$ summand of  $\oplus_j H^{j}_c(X_{\bar s}, \sA_{i,\bar s}) $ has the same dimension over $ \bar \Q_\ell$ as the pure weight $w$ summand of $\oplus_j H^{j}_c(X_{\bar s}, \sA^\sigma_{i, \bar s}) $ over $ \bar \Q_{\ell'}$.  By duality,  the same it true replacing 
the cohomologies $\oplus_j H^j_c(X_{\bar s}, \sA_{i, \bar s})$  and $\oplus_j H^j_c(X_{\bar s}, \sA^\sigma_{i, \bar s})$ by the cohomologies $\oplus_j H^j(X_{\bar s}, \sA_{i,\bar s})$  and $\oplus_j H^j(X_{\bar s}, \sA^\sigma_{i,\bar s})$.
Applying this for $w=1$, from Lemma~\ref{lem:weights}, we conclude 
$H^1(\bar X_{\bar s}, j_{\bar s!*}\sA^\sigma_{i \bar s})=  H^1(U_{\bar s}, a_{\bar s*}\sA^\sigma_{i, \bar s})=0.$

\medskip

Pulling back  along the specialization homomorphism
 $sp: \pi_1^{\rm \acute{e}t}( X, x)\to  \pi_1^{\rm \acute{e}t, p'}( X_{\bar s}, x_{\bar s})$ defines
 the $\bar \Q_{\ell '}$-lisse sheaves $\sV_{i \lambda}^\sigma$ and $\sA_i^\sigma$ on $X$,
together with the  specialization homomorphism
 \ga{}{ H^1(U_{\bar s},  a_{{\bar s}*} \sA^\sigma_{i, {\bar s}} ) \to  H^1(U,  a_{*}\sA^\sigma_{i }) .\notag}
By local acyclicity again,  it is an isomorphism thus $H^1(U,  a_{*}\sA^\sigma_{i })=0$.

\medskip

  We now define $\sV_i^{ \sigma \ \rm top}$ to be the $\bar \Q_{\ell'}$-local system on $X$ which is defined by composing the representation with the homomorphism  $\pi_1^{\rm top}(X,x) \to \pi^{\rm \acute{e}t}_1(X,x)$. By the comparison between Betti and \'etale cohomology one has  $H^1(U, a_* \sE nd(\sV_i^{ \sigma  \ \rm top}))=0$. 
  Furthermore,   the $\sV_i^{ \sigma \ \rm top}$ on $X$  are irreducible and  pairwise non-isomorphic, and  ${\rm det} (\sV_i^{ \sigma \ \rm top})$ has order $d$. 

\medskip
On the other hand, as $\sV_{i,\lambda, s}$ is tame by Lemma~\ref{lem:ext}, 
it is tame in restriction to all curves (\cite[Thm.~1.1]{KS10}). 
Taking a smooth curve $\bar C\subset \bar X$ which is a complete intersection of smooth ample divisors in good position with respect to $\bar X\setminus X,$ and denoting by $C$ its intersection with $X$, 
 the restriction  $\sV_{i, \lambda, \bar s}|_{C_{\bar s}}$ has the same monodromy group as the one of $\sV_{i, \lambda, \bar s}$, thus has quasi-unipotent monodromies at the points $C_{\bar s}\cap D_{\bar s}$, and their eigenvalues have order bounded by $h$ . By  \cite[Thm.~9.8]{Del73b},  $\sV^\sigma_{i, \lambda, \bar s}$  has the same property.  Thus $ \sV_i^{ \sigma \ \rm top}$ has quasi-unipotent monodromies along the components of $\bar X\setminus X$, with order of the eigenvalues bounded by $h$.  (There is a slight abuse of notations here,  the curve $\bar C$ chosen is perhaps not defined over the same $S$, we just take a $s$ in the construction on which $C$ is defined).

\medskip

As $\pi_1^{\rm top}(X,x)$ is a finitely generated group, there is a subring $A\subset \bar \Q_{\ell' }$ of finite type such that the monodromy representations of the $ \sV^{\sigma_i \ {\rm top}}$ factor through
$\pi_1^{\rm top} (X,x)\to GL(r, A)$. We fix a complex embedding $A\hookrightarrow \C$.  This defines the complex local systems $\sV_i^{\sigma \ \C}$. By definition, there are irreducible, pairwise different, have determinant of order precisely $d$ and the eigenvalues of the monodromies at infinity have order at most $h$. 
 By comparison between Betti and \'etale cohomology, they are cohomologically rigid. 
Thus the set of isomorphism classes of the 
$\sV_i^{\sigma \ \C}$ is precisely $\sS(r,d,  h)$. On the other hand, they are integral at all places of number rings in $\bar \Z_{\ell '}$ which divide $\ell'$. We conclude the proof by doing the construction for all $\ell'$ divided by places in $\Sigma$.

\end{proof}

\begin{remark} \label{rmk:deligne}
This remark is due to Pierre Deligne.  If in Theorem~\ref{thm:int}, the irreducible complex local system $\sV$ with quasi-unipotent monodromies along the components of $D$   is assumed to be
orthogonal, then it is integral under the assumption $\H^1(\bar X, j_{!*} \wedge^2 \sV)=0$. This assumption is weaker than the assumption $\H^1(\bar X, j_{!*} \sE nd^0(\sV))=0$ of Theorem~\ref{thm:int}, as, as $\sV$ is self-dual, 
$\wedge^2 \sV$ is a summand of $\sE nd^0(\sV)$, and thus 
$\H^1(\bar X, j_{!*} \wedge^2 \sV)$ is a summand of $\H^1(\bar X, j_{!*} \sE nd^0(\sV))$.
 Likewise, if 
$\sV$ is assumed to be symplectic, then it is integral under the assumption $\H^1(\bar X, j_{!*}  Sym^2 \sV)=0$. 
Again   $ Sym^2 \sV$ is a summand   $\sE nd^0(\sV)$. 
We do not detail the proof. One has to  perform the stack construction in Section~\ref{sec:mod} for the corresponding category of orthogonal, resp. symplectic local systems, and eventually see that the companion construction in Section~\ref{sec:proof} preserves this category. 
\end{remark}

\end{document}